\newtheorem{lemma}{Lemma}
\newtheorem{theorem}{Theorem}
\theoremstyle{definition}
\newtheorem*{con}{Conjecture}
\newtheorem*{remark}{Remark}
\newcommand{\R}{\, \text{Re}\, }
\newcommand{\I}{\, \text{Im}\, }
\begin{document}
\title{Zeros of Dirichlet series with periodic coefficients}
\author{By Eric Saias and Andreas Weingartner}
\maketitle

\begin{abstract}
Let $a=(a_n)_{n\ge 1}$ be a periodic sequence, $F_a(s)$ the meromorphic continuation of 
$\sum_{n\ge 1} a_n/n^s$, and $N_a(\sigma_1, \sigma_2, T)$ the number of zeros of $F_a(s)$, 
counted with their multiplicities, in the rectangle $\sigma_1 < \R s < \sigma_2$, $|\I s | \le T$.
We extend previous results of Laurin\v{c}ikas, Kaczorowski, Kulas, and Steuding, by showing that if $F_a(s)$
is not of the form $P(s) L_{\chi} (s)$, where $P(s)$ is a Dirichlet polynomial and $L_{\chi}(s)$
a Dirichlet L-function, then there exists an $\eta=\eta(a)>0$ such that for all
$1/2 < \sigma_1 < \sigma_2 < 1+\eta$, we have
$c_1 T \le N_a(\sigma_1, \sigma_2, T) \le c_2 T$ for sufficiently large $T$, and suitable
positive constants $c_1$ and $c_2$ depending on $a$, $\sigma_1$, and $\sigma_2$.
\end{abstract}

\section{Introduction}

One of the most important open problems in mathematics is the

\begin{con} (Generalized Riemann Hypothesis)
Every $L_\chi(s)$ function, associated with a Dirichlet character $\chi$,
is zero-free in the open half-plane $\R (s) > 1/2$.
\end{con}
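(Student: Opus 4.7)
I must state at the outset that the statement in question is the Generalized Riemann Hypothesis, one of the longest-standing open problems in mathematics; no proposal can be more than a sketch of the classical attack and an identification of the point where it collapses. The plan I would attempt follows the complex-analytic framework initiated by Hadamard and de la Vallée Poussin and sharpened by Landau, Selberg, and Weil.

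The first step would be to establish zero-freeness in the easy half-plane $\R s > 1$ via the Euler product $L_\chi(s) = \prod_p (1-\chi(p)p^{-s})^{-1}$, converting the product into a Dirichlet series with non-negative logarithm. Combined with the classical trigonometric identity $3 + 4\cos\theta + \cos 2\theta \ge 0$ applied to the triple $L(s,\chi_0)^3 L(s,\chi)^4 L(s,\chi^2)$, one obtains zero-freeness on the line $\R s = 1$ and, quantitatively, a region $\R s > 1 - c/\log(q(|\I s| + 2))$. The second step would be to use the functional equation $\xi_\chi(s) = \varepsilon(\chi)\, \xi_\chi(1-s)$ to pair zeros symmetrically about $\R s = 1/2$; combining this with the Hadamard factorization of the completed $L$-function expresses the logarithmic derivative as a sum over non-trivial zeros, and one then tests this identity against a carefully chosen non-negative function, as in Weil's explicit formula, hoping that the resulting inequality would force every zero onto the critical line.

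The hard part, and the reason this plan will not go through, is precisely bridging the gap between $\R s = 1$ and $\R s = 1/2$. Every known positivity argument degenerates at the edge of the critical strip, and in a century of work no analytic technique has produced a uniform improvement of the zero-free region to a fixed vertical strip, let alone the full half-plane. Weil's positivity criterion reformulates the problem but does not solve it; the Hilbert--Pólya program would require the construction of a self-adjoint operator whose spectrum reproduces the non-trivial zeros, which remains hypothetical; and Connes's adelic trace-formula approach, like the function-field analogue proved by Deligne and Weil, has not yet yielded the archimedean case for number fields.

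In summary, the central obstacle is the absence of any mechanism that converts the symmetry of the functional equation into a positivity inequality sharp enough to exclude off-line zeros. I would therefore not expect the strategy above to succeed, and an honest proposal must concede that GRH lies beyond the reach of any currently known method; the remainder of the present paper consequently assumes GRH only in the form of the conjecture stated, and proves its unconditional results without relying on it.
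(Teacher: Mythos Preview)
Your assessment is correct in its essential point: the statement is presented in the paper as a \emph{Conjecture} (note the environment name \texttt{con}), not as a theorem, and the paper makes no attempt whatsoever to prove it. There is therefore no ``paper's own proof'' to compare against. Your survey of the classical partial results (Euler product for $\R s>1$, the $3+4\cos\theta+\cos 2\theta$ argument for the line $\R s=1$, the explicit formula and Weil positivity) and of the known obstructions is accurate and appropriate.

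One small correction to your closing sentence: the paper does not \emph{assume} GRH anywhere, even conditionally. Its results (Theorems~1--4) are entirely unconditional and in fact go in the opposite direction: they show that a Dirichlet series with periodic coefficients which is \emph{not} of the form $P(s)L_\psi(s)$ necessarily has $\asymp T$ zeros in any strip $1/2<\sigma_1<\R s<\sigma_2<1+\eta$. The conjecture is stated only as motivation, to frame the $L_\psi(s)$ as the ``primitive'' functions that conjecturally do not vanish for $\R s>1/2$.
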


In this paper, we enlarge the set of $L_\chi (s)$ to Dirichlet series with
periodic coefficients, and investigate what can be shown in the
opposite direction about zeros in $\R (s)>1/2$. For the
distribution of zeros of these meromorphic functions in the
whole complex plane, we refer to Chapter 11 of the book by Steuding \cite{Steuding07}.

Let $q$ be a positive integer. Let $H_q$ be the $q$-dimensional Hilbert
space of Dirichlet series $\sum_{n\ge 1} a_n/n^s$, where $a=(a_n)_{n\ge 1}$ is a
$q$-periodic sequence of complex numbers, and with the scalar product
given by
\begin{equation}\label{one}
\langle  \sum_{n\ge 1} a_n/n^s , \sum_{n\ge 1} b_n/n^s \rangle
= \sum_{n=1}^q a_n \overline{b_n} .
\end{equation}
It is well known 
%(see for instance \cite{Steuding07}, Chapter 11) 
that these Dirichlet series $\sum_{n\ge 1} a_n/n^s$
have a meromorphic continuation to the entire complex plane with at most
one simple pole at $s=1$. We shall denote this meromorphic continuation by $F_a(s)$.

Let $\mathcal{D}_q^{\text{pr}}$ be the set of primitive characters that induce
the Dirichlet characters modulo $q$. For $\psi$ in $\mathcal{D}_q^{\text{pr}}$, we
denote by $E_{q, \psi}$ the subspace of $H_q$ generated by the functions
$L_\psi(s)/d^s$ where $d$ divides $q/\text{conductor}(\psi)$. 

We denote by $N_F (\sigma_1, \sigma_2, T)$ (respectively $N'_F (\sigma_1,
\sigma_2, T)$ ) the number of zeros of 
the function $F(s)$ in the rectangle 
$\sigma_1 < \R s < \sigma_2$, $|\I s | \le T$,
counted with their multiplicities (resp. without their multiplicities).

We begin with a structural theorem for $H_q$.

\begin{theorem}\label{thm1}
Let $q$ be a positive integer.
\begin{enumerate}
\item[(i)] 
The functions $L_\chi(s)/d^s$, where $d$ runs through the divisors of $q$,
and $\chi$ is a Dirichlet character modulo $q/d$, 
form an orthogonal basis of $H_q$.
\item[(ii)] We have the orthogonal decomposition
\begin{equation*}
H_q = \bigoplus_{\psi \in \mathcal{D}_q^{\text{pr}}} E_{q,\psi}
\end{equation*}
\end{enumerate}
\end{theorem}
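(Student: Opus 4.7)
The plan is to start by observing that $H_q$ has dimension $q$, since a $q$-periodic sequence is determined by its first $q$ values, and to count the proposed basis elements in (i):
\[
\sum_{d \mid q} \#\{\chi \bmod q/d\} = \sum_{d \mid q} \phi(q/d) = q.
\]
Consequently, once I verify mutual orthogonality (together with nonvanishing, which is clear because the coefficient of $n=d$ in $L_\chi(s)/d^s$ equals $\chi(1) = 1$), these $q$ vectors must automatically form an orthogonal basis, and part (i) follows.

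The heart of the proof is the inner product computation. The $n$-th Dirichlet coefficient of $L_\chi(s)/d^s$ is $\chi(n/d)$ when $d \mid n$ and $0$ otherwise (with $\chi$ mod $q/d$ vanishing on integers sharing a factor with $q/d$). I fix two candidate basis elements with parameters $(d_1,\chi_1)$ and $(d_2,\chi_2)$, set $D = \text{lcm}(d_1,d_2)$ and $e_i = D/d_i$, and substitute $n = Dm$ in \eqref{one} to get
\[
\bigl\langle L_{\chi_1}(s)/d_1^s,\, L_{\chi_2}(s)/d_2^s \bigr\rangle = \sum_{m=1}^{q/D} \chi_1(e_1 m)\,\overline{\chi_2(e_2 m)}.
\]
Since $D \mid q$, each $e_i$ divides $q/d_i$, the modulus of $\chi_i$; hence $\chi_i(e_i m) = 0$ whenever $e_i > 1$. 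This forces $d_1 = d_2$ for any nonzero contribution, and standard orthogonality of characters mod $q/d_1$ then gives $\phi(q/d_1)\,\delta_{\chi_1,\chi_2}$.

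For part (ii), I would group the basis from (i) by the unique primitive character $\psi \in \mathcal{D}_q^{\text{pr}}$ that induces $\chi$; write $f = \text{conductor}(\psi)$. The classical factorisation
\[
L_\chi(s) = L_\psi(s)\!\prod_{p \mid q/d,\, p \nmid f}\!\bigl(1 - \psi(p) p^{-s}\bigr)
\]
expresses $L_\chi(s)/d^s$ as a linear combination of functions $L_\psi(s)/(dk)^s$ with $dk \mid q/f$, placing every basis vector with primitive inducer $\psi$ inside $E_{q,\psi}$. For each $\psi$ of conductor $f$ there is exactly one basis vector per divisor $d \mid q/f$, so $E_{q,\psi}$ contains $\tau(q/f)$ mutually orthogonal vectors; since $E_{q,\psi}$ is by definition spanned by the $\tau(q/f)$ functions $L_\psi(s)/d^s$, these vectors constitute an orthogonal basis of $E_{q,\psi}$, and the decomposition in (ii) is then immediate from part (i).

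The delicate step I expect is the orthogonality calculation in part (i): the vanishing of $\chi_i(e_i m)$ when $e_i > 1$ hinges on the fact that $D/d_i$ automatically divides the modulus $q/d_i$, so the induced character kills every term outside the diagonal case $d_1 = d_2$. Without this observation, one would be stuck comparing characters of different moduli. The remaining ingredients—the dimension count and the regrouping via induced characters—are essentially bookkeeping.
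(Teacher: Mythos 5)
Your proof is correct, but it takes a genuinely different route from the paper's in both halves. For (i), the paper simply cites Lemma 1 of Codec\`a--Dvornicich--Zannier for the orthogonality of the shifted characters $\widetilde{\chi}(n)=\chi(n/d)$ and then identifies their Dirichlet series with $L_\chi(s)/d^s$; you instead prove that orthogonality from scratch, and your lcm computation (the observation that $e_i=D/d_i$ divides the modulus $q/d_i$, so every off-diagonal term is killed) is precisely the content of the cited lemma, making your (i) a self-contained version of the same statement, completed by the count $\sum_{d\mid q}\varphi(q/d)=q$. For (ii) the arguments genuinely diverge: the paper writes $L_\chi(s)/(d^s L_\psi(s))$ as $b^s\prod_{p\mid b}(1-\psi(p)p^{-s})$ with $q'=q/\mathrm{cond}(\psi)$, $b=q'/d$, and shows via a unitriangular change of basis (with respect to a monomial ordering in the variables $x_i=p_i^s$) that these functions span the same space as $\{d^s:d\mid q'\}$, which gives $\mathrm{Vect}\{L_\chi(s)/d^s\}=E_{q,\psi}$ directly; you prove only the inclusion $L_\chi(s)/d^s\in E_{q,\psi}$ by expanding the finite Euler product and then close the argument with a dimension count ($\tau(q/f)$ pairwise orthogonal nonzero vectors sitting inside a space spanned by $\tau(q/f)$ functions). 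Both routes are valid; yours avoids setting up the monomial order and is arguably leaner, while the paper's triangularity argument produces an explicit change of basis and yields the linear independence of the generators $L_\psi(s)/d^s$ of $E_{q,\psi}$ as a direct byproduct (your count recovers it only a posteriori). One small step you should spell out rather than assert: $dk\mid q/f$, which holds because $f\mid q/d$ and every prime factor of the squarefree $k$ divides $q/d$ but not $f$, hence divides $q/(df)$.
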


Thus every function $F_a(s)$ can be written in a unique way as 
\begin{equation}\label{two}
    F_a(s) = \sum_{\psi \in \mathcal{D}_q^{\text{pr}}} P_\psi(s) L_{\psi} (s) 
\end{equation}
where the $P_\psi(s)$ are Dirichlet polynomials that 
satisfy certain specific conditions. Ignoring these conditions for the moment,
we get a much larger set of functions, for which we have the following result.

\begin{theorem}\label{thm2}
Let $ \mathcal{C}$ be a finite set of at least two primitive Dirichlet characters,
and let $(P_\psi)_{\psi \in \mathcal{C}}$ be a family of non-zero Dirichlet polynomials.
Define
\begin{equation*}
F(s) := \sum_{\psi \in \mathcal{C}} P_\psi(s) L_\psi(s) .
\end{equation*}
Then there exists a number $\eta=\eta(F)>0$ such that,
for all real numbers
$\sigma_1$ and $\sigma_2$ with $1/2 \le \sigma_1 < \sigma_2 \le 1+\eta$
and all sufficiently large $T$, we have
\begin{equation*}
    N'_F(\sigma_1,\sigma_2, T) \gg_{F, \sigma_1, \sigma_2} T .
\end{equation*}
\end{theorem}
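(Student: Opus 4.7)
The plan is to combine Voronin's joint universality theorem for Dirichlet $L$-functions with Rouch\'e's theorem in the strip $1/2 < \R s < 1$, and then to extend the zero count across the line $\R s = 1$ via an almost-periodicity argument.

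First, fix $s_0$ with $\R s_0 \in (\max(1/2,\sigma_1),\min(1,\sigma_2))$ at which every $P_\psi(s_0) \ne 0$, and pick a small closed disc $K$ centred at $s_0$, contained in $\{1/2 < \R s < 1\}$, on which each $P_\psi$ is non-vanishing (possible because each $P_\psi$ is a non-zero Dirichlet polynomial, hence has only finitely many zeros in any bounded set). I then construct holomorphic non-vanishing functions $g_\psi$ on a neighbourhood of $K$ such that
\begin{equation*}
  G(s) := \sum_{\psi \in \mathcal{C}} P_\psi(s)\, g_\psi(s)
\end{equation*}
has a simple zero at $s_0$ and no other zero in $K$. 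This is easy since $|\mathcal{C}|\ge 2$: choose $\psi_0 \in \mathcal{C}$ for which $\sum_{\psi\ne\psi_0} P_\psi(s_0) \ne 0$ (some such $\psi_0$ exists because the $P_\psi$ are nonzero), set $g_\psi \equiv 1$ for $\psi\ne\psi_0$, and take $g_{\psi_0}$ affine, bounded away from zero on $K$, tuned so that $G(s_0)=0$ and $G'(s_0)\ne 0$. Voronin's joint universality --- applicable because the characters in $\mathcal{C}$ are pairwise distinct primitive characters --- then yields, for every $\varepsilon>0$, a set of $t \in [0,T]$ of measure $\gg_\varepsilon T$ on which
\begin{equation*}
  \max_{\psi \in \mathcal{C}}\; \max_{s \in K}\; \bigl|L_\psi(s+it)-g_\psi(s)\bigr| < \varepsilon .
\end{equation*}
Extending the equidistribution to the finitely many additional frequencies $\log n$ occurring in the $P_\psi$ (a routine Kronecker-type step) also gives $P_\psi(s+it) = P_\psi(s) + o(1)$ uniformly on $K$, whence $F(s+it) = G(s) + O(\varepsilon)$ on $\partial K$. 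By Rouch\'e, $F$ has a zero inside $K+it$ for each such $t$; picking a well-spaced subset of these $t$ produces $\gg T$ distinct zeros of $F$ with real part in $(\sigma_1,\min(\sigma_2,1))$ and $|\I s|\le T$.

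The argument above settles every case with $\sigma_1 < 1$. The remaining range $1 \le \sigma_1 < \sigma_2 \le 1+\eta$ is handled separately. The idea is that on every vertical line $\R s = \sigma > 1$ the series $F(\sigma+it)$ converges absolutely and is almost periodic in $t$, so a single zero $\rho_0$ with $1 < \R \rho_0 < 1+\eta$ automatically produces $\gg T$ zeros via Bohr translates. To guarantee at least one such zero for suitably small $\eta>0$, I would perturb the construction of the first step by sliding $s_0$ rightward across $\R s=1$: the $g_\psi$ depend continuously on $s_0$, and the Bohr--Jessen value distribution of $(L_\psi(\sigma+it))_{\psi\in\mathcal{C}}$ remains non-degenerate for $\sigma$ in a one-sided neighbourhood of $1$, so the target $G$ continues to possess a zero there. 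Littlewood's lemma applied to the rectangle $\sigma_1 < \R s < \sigma_2$, $0 < \I s < T$, together with a uniform mean-square lower bound $\int_0^T |F(\sigma+it)|^2\, dt \gg T$ on $(1,1+\eta)$, then converts this into $\gg T$ actual zeros of $F$ in the sub-strip.

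The main obstacle is the crossing $\R s = 1$. Voronin universality is only available strictly inside the critical strip, so the Voronin--Rouch\'e mechanism must be replaced by an almost-periodicity and mean-value argument for $\R s \ge 1$. Choosing $\eta=\eta(F)>0$ uniformly so that this forces $\gg T$ zeros in every sub-strip of $(1,1+\eta)$ is the central technical point, and depends quantitatively on how close $F$ comes to possessing an Euler product; the hypothesis $|\mathcal{C}|\ge 2$ is precisely what prevents $F$ from being a single $L$-function times a Dirichlet polynomial, and hence what keeps $\eta$ positive.
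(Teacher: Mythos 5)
Your first step ($\sigma_1<1$) is essentially the classical universality--Rouch\'e argument; the paper does not redo it but simply quotes Theorem 2 of Kaczorowski--Kulas. (Even there, note a technical wrinkle: you cannot just intersect the positive-density set coming from joint universality with a separate Kronecker set making $P_\psi(s+it)\approx P_\psi(s)$ --- two sets of positive lower density may be disjoint. One needs a form of joint universality that controls the finitely many prime frequencies of the polynomials simultaneously, which is exactly what the quoted result and the paper's own later argument do.) The real issue is your second step, and there the proposal has a genuine gap: the range $1\le\sigma_1<\sigma_2\le 1+\eta$ is the entire new content of the theorem, and ``sliding $s_0$ rightward across $\R s=1$'' with an appeal to non-degeneracy of the Bohr--Jessen distribution is not an argument. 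Universality fails for $\R s\ge 1$; for $\sigma>1$ each $L_\psi(\sigma+it)$ is confined to a bounded region away from $0$, so the existence of even one zero of $F$ with real part in a prescribed subinterval of $(1,1+\eta)$ must be constructed, and this is precisely where $\eta$ has to come from. Your fallback via Littlewood's lemma plus a mean-square lower bound also cannot work in that direction: $\int_0^T\log|F(\sigma+it)|\,dt$ controlled by a mean square gives upper bounds on zero counts, not lower bounds, and almost periodicity only replicates zeros that you must first exhibit.

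What the paper actually does in this range is a ``weak joint universality'' at a fixed $\sigma\in(1,1+\eta]$. Lemma \ref{lem1} solves the system $\sum_{p>y}\chi_j(p)p^{-\sigma-it_p}=z_j$ ($1\le j\le n$) continuously in the target $z$, and the requirement that the prime sums $S_a=\sum_{p>y,\,p\equiv a\,(q)}p^{-\sigma}$ be large (inequality \eqref{Sa}) is exactly what forces $\sigma\le 1+\eta$ and defines $\eta$. Lemma \ref{lem2} then passes from these linear prime sums to the Euler products themselves, using the Brouwer fixed point theorem to absorb the discrepancy between $-\log(1-x)$ and $x$, so that the twisted products $G_j$ hit the exact values $e^{2\pi i j/n}$, whose sum vanishes at $\sigma$. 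Finally, truncation of the Euler products, Weyl/Kronecker equidistribution of $(p^{it})_{p\le p_M}$ (which handles the Dirichlet polynomials and the finitely many initial Euler factors at once), and Rouch\'e's theorem give a positive-density set of $t$ with a zero of $F(s+it)$ in a small disc about $\sigma$, hence $\gg T$ zeros in every sub-strip. Since the midpoint $\sigma=(\sigma_1+\sigma_2)/2$ can be taken anywhere in $(1,1+\eta]$, this yields the full statement; none of this machinery (the solvable prime-sum system, the fixed-point correction, the exact cancellation $\sum_j G_j(\sigma)=0$) is present in your sketch, and without it the crossing of $\R s=1$ remains unproved.
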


For the upper bound for the number of zeros, we come back to the
smaller set of Dirichlet series with periodic coefficients.

\begin{theorem}\label{thm3}
Let $a=(a_n)_{n\ge 1}$ be a non-zero periodic sequence. Then
\begin{equation*}
N_{F_a}\left( \frac{1}{2}+u, +\infty, T \right) \ll_a T \frac{\log(1/u)}{u}
\end{equation*}
for $0 < u \le 1/2$ and $T\ge 1$.
\end{theorem}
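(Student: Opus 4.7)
The plan is to estimate $N_{F_a}(1/2+u, \infty, T)$ by applying Littlewood's lemma to $F_a(s)$ on the rectangle $R = \{s \colon \sigma_1 < \R s < \sigma_2,\ |\I s| \le T\}$, where $\sigma_1 := 1/2 + u/2$ and $\sigma_2$ is a fixed constant depending on $a$, chosen large enough that $F_a$ has no zeros in the half-plane $\R s \ge \sigma_2$. To see that such a $\sigma_2$ exists, let $n_0$ be the smallest index with $a_{n_0}\neq 0$; then $|F_a(s)| \ge |a_{n_0}|/(2 n_0^{\R s})$ once $\R s$ is large enough to dominate the tail $\sum_{n>n_0} |a_n|/n^{\R s}$. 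The possible simple pole of $F_a$ at $s=1$ can be handled by applying the lemma to $(s-1)F_a(s)$ in place of $F_a(s)$; this perturbs the final estimate by only $O(1)$.

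Littlewood's lemma (after shifting $T$ by $O(1)$ so that the horizontal boundary is free of zeros) yields
\begin{equation*}
2\pi \sum_{\substack{F_a(\rho)=0 \\ \rho \in R}} (\R \rho - \sigma_1) \;=\; \int_{-T}^{T}\!\log|F_a(\sigma_1+it)|\,dt \;-\; \int_{-T}^{T}\!\log|F_a(\sigma_2+it)|\,dt \;+\; O(\log T),
\end{equation*}
where the $O(\log T)$ absorbs the two horizontal argument integrals, via the standard bound $\arg F_a(\sigma \pm iT) \ll \log T$ uniformly in $\sigma \in [\sigma_1,\sigma_2]$, proved as in the classical Riemann--von Mangoldt argument for $\zeta$ by counting sign changes of $\R F_a$ on the horizontal segment via Jensen's formula on a disk of bounded radius. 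Since every zero $\rho$ in $R$ with $\R \rho \ge 1/2 + u$ contributes at least $u/2$ to the left-hand sum, the desired estimate reduces to bounding the right-hand side by $O\bigl(T \log(1/u)\bigr)$.

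The integral on $\R s = \sigma_2$ is $O(T)$ because $|F_a(\sigma_2+it)|$ is pinned between two positive constants. The key term is the integral on $\R s = \sigma_1$, which I would control by a mean-square estimate followed by Jensen's inequality. Theorem \ref{thm1} gives $F_a = \sum_\psi P_\psi(s) L_\psi(s)$ with each Dirichlet polynomial $P_\psi$ bounded on the line $\R s = \sigma_1$, so by Cauchy--Schwarz it suffices to have the classical second-moment bound
\begin{equation*}
\int_{-T}^{T} |L_\psi(\sigma_1 + it)|^2 \, dt \;\ll\; \frac{T}{u},
\end{equation*}
uniform in $u \in (0, 1/2]$ and $T \ge 1$, which follows from the approximate functional equation together with $\zeta(1+u) \ll 1/u$ and a standard convexity step. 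Thus $\int_{-T}^{T} |F_a(\sigma_1+it)|^2\, dt \ll T/u$, and Jensen's inequality (concavity of $\log$) gives
\begin{equation*}
\int_{-T}^{T} \log|F_a(\sigma_1+it)|\, dt \;\le\; T \log(C/u) \;\ll\; T \log(1/u).
\end{equation*}

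Combining everything yields $N_{F_a}(1/2+u,\infty,T)\cdot (u/2) \ll T \log(1/u)$, which gives the theorem. The main technical points to verify will be: (i) that the argument-bound $O(\log T)$ on horizontal segments is uniform in $\sigma$ and applies to the full linear combination $F_a$, not merely to a single $L$-function; (ii) that the second-moment bound is uniform down to very small $u$ (in the regime $u < 1/T$ the claim is in fact trivial, which one can exploit to simplify matters if needed); and (iii) clean bookkeeping of the pole of $F_a$ at $s=1$.
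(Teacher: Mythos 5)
Your proposal is correct and follows essentially the same route as the paper: Littlewood's lemma on a rectangle whose left edge is $\R s=(1+u)/2$, a mean-square bound $\int|F_a((1+u)/2+it)|^2\,dt\ll_a T/u$, and Jensen's inequality to convert this into $\int\log|F_a|\ll_a T\log(1/u)$, with each zero of real part $>1/2+u$ contributing at least $u/2$ to the Littlewood sum. The only substantive differences are in sourcing and bookkeeping: the paper imports the mean-square estimate directly from the Ka\v{c}enas--Laurin\v{c}ikas theorem for the periodic zeta function rather than via your decomposition $F_a=\sum_\psi P_\psi L_\psi$ with Cauchy--Schwarz and uniform second moments of $L_\psi$, it cites Steuding for the argument bound on the horizontal segments, and it avoids the pole at $s=1$ by taking the rectangle between heights $1$ and $T$ instead of multiplying by $(s-1)$ (where, incidentally, your perturbation is $O(T)$ after the $T\log T$ contributions of $\log|s-1|$ on the two vertical edges cancel, not $O(1)$, though this is still harmless).
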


Combining these three results, we finally get the result that motivated
this paper.

\begin{theorem}\label{thm4}
Let $q\ge 1$. Let $a=(a_n)_{n\ge 1}$ be a $q$-periodic sequence such that
$\sum_{n\ge 1} a_n/n^s$ does not belong to one of the subspaces $E_{q,\psi}$,
$\psi \in \mathcal{D}_q^{\text{pr}}$. 
Then there exists a number $\eta = \eta(a) >0$ such that, for all real numbers
$\sigma_1$ and $\sigma_2$ with $1/2 < \sigma_1 < \sigma_2 \le 1+\eta$
and all sufficiently large $T$, we have
\begin{equation*}
    N_{F_a}(\sigma_1,\sigma_2, T) \asymp_{a, \sigma_1, \sigma_2} 
    N'_{F_a}(\sigma_1,\sigma_2, T) \asymp_{a, \sigma_1, \sigma_2}
    T .
\end{equation*}
\end{theorem}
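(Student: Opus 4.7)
The plan is to combine the three preceding theorems into a single bookkeeping argument, so the proof should be short. First, I would invoke Theorem~\ref{thm1}(ii) to write $F_a$ in the canonical form (\ref{two}),
\begin{equation*}
F_a(s) = \sum_{\psi \in \mathcal{D}_q^{\text{pr}}} P_\psi(s) L_\psi(s),
\end{equation*}
and set $\mathcal{C} = \{\psi \in \mathcal{D}_q^{\text{pr}} : P_\psi \neq 0\}$. The hypothesis that $F_a$ does not lie in any single $E_{q,\psi}$, combined with the uniqueness of the orthogonal decomposition, forces $|\mathcal{C}| \geq 2$: indeed, if at most one $P_\psi$ were non-zero, say $P_{\psi_0}$, then $F_a = P_{\psi_0} L_{\psi_0}$ would lie in $E_{q,\psi_0}$, contradicting the assumption. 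This delivers the cardinality condition required by Theorem~\ref{thm2}.

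Next, I would apply Theorem~\ref{thm2} to $F_a = \sum_{\psi \in \mathcal{C}} P_\psi L_\psi$ to obtain a number $\eta = \eta(F_a) > 0$ such that, for all $1/2 \leq \sigma_1 < \sigma_2 \leq 1+\eta$ and all sufficiently large $T$,
\begin{equation*}
N'_{F_a}(\sigma_1, \sigma_2, T) \gg_{F_a, \sigma_1, \sigma_2} T.
\end{equation*}
This handles the lower bound. For the matching upper bound, I would apply Theorem~\ref{thm3} with $u = \sigma_1 - 1/2$; this is a fixed positive constant because of the strict inequality $\sigma_1 > 1/2$ in the hypothesis, so the factor $\log(1/u)/u$ is harmless, and we obtain
\begin{equation*}
N_{F_a}(\sigma_1, \sigma_2, T) \leq N_{F_a}\bigl(\tfrac{1}{2}+u, +\infty, T\bigr) \ll_{a, \sigma_1} T.
\end{equation*}

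Combining these with the trivial inequality $N'_{F_a} \leq N_{F_a}$ (counting with versus without multiplicities), the chain
\begin{equation*}
T \ll N'_{F_a}(\sigma_1, \sigma_2, T) \leq N_{F_a}(\sigma_1, \sigma_2, T) \ll T
\end{equation*}
yields all three asymptotic equivalences at once. There is no genuine obstacle: the work has been done in Theorems~\ref{thm1}--\ref{thm3}. The only points worth flagging in the write-up are the translation of the hypothesis into $|\mathcal{C}| \geq 2$ (which is where Theorem~\ref{thm1}(ii) is essential), and the role of the strict inequality $\sigma_1 > 1/2$ in making the upper bound from Theorem~\ref{thm3} of the right order of magnitude.
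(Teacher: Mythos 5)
Your proposal is correct and is exactly the deduction the paper intends: the paper gives no separate proof of Theorem~\ref{thm4} beyond the remark that it follows by ``combining these three results,'' i.e.\ the decomposition \eqref{two} from Theorem~\ref{thm1}(ii) forcing at least two non-zero $P_\psi$, the lower bound from Theorem~\ref{thm2}, and the upper bound from Theorem~\ref{thm3} together with $N'_{F_a}\le N_{F_a}$. The only cosmetic adjustment is that Theorem~\ref{thm3} is stated for $0<u\le 1/2$, so when $\sigma_1>1$ one should take $u=\min(\sigma_1-\tfrac12,\tfrac12)$, which changes nothing.
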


\begin{remark}
It follows that, if $F_a(s)$ does not vanish in $\R s > 1/2$, then
\begin{equation*}
    F_a(s) = P(s) L_\psi (s)
\end{equation*}
for some primitive Dirichlet character $\psi$ and some Dirichlet polynomial $P(s)$.
Thus the functions $L_\psi (s)$ with $\psi \in \mathcal{D}_q^{\text{pr}}$ 
turn out to be a kind of ``primitive function'' for all those $F_a(s)$ with 
$q$-periodic $a=(a_n)_{n\ge 1}$, which conjecturally do not vanish in $\R s > 1/2$.
\end{remark}

We recall that the Dirichlet characters are exactly the arithmetic
functions which are both periodic and completely multiplicative. What
are the roles of these two properties for the Generalized Riemann Hypothesis (GRH)? 
What we find here about the zeros of Dirichlet series with periodic coefficients, 
confirms the commonly held idea that in any proof of GRH, the Euler Product, which 
comes from complete multiplicativity, must play a significant role.

Theorem \ref{thm1} follows easily from the orthogonal basis 
of $q$-periodic sequences canonically associated with Dirichlet
characters modulo $q$, which has been used, and perhaps discovered, 
by Codec\`{a}, Dvornicich, and Zannier \cite{CDZ98}, Lemma 1.

The case $\eta=0$ in Theorem \ref{thm2} follows from \cite{KK07}, Theorem 2,
of Kaczorowski and Kulas.
They use the classical way to get zeros off the critical line, which is
to apply a strong joint universal property for the Dirichlet L-functions.
But, as far as we know, that method requires that one works within the strip 
$1/2 < \R s < 1$.
To get zeros in the half-plane $\R s > 1$, we use here a kind of weak joint universal 
property for the Dirichlet L-functions. This leads us to add a new tool into the 
picture: the Brouwer fixed point theorem (see Lemma \ref{lem2}).

Theorem \ref{thm3} is an explicit form of the upper bound of Steuding
\begin{equation}\label{SteuInt}
N_{F_a}(1/2+u, +\infty, T) \ll_{a,u} T.
\end{equation}
More precisely, the proof of the slightly weaker 
$N_{F_a}(1/2+u, +\infty, T) \ll_{a,u} T\log T$ 
appears in \cite{Steu}. 
In \cite{Steuding07}, the upper bound \eqref{SteuInt} is stated in Theorem 11.3, 
but the proof
is given only in the analog situation of the extended Selberg class.
For the sake of completeness, we give here the details of the proof in our
situation of Dirichlet series with periodic coefficients, and take the
opportunity to make the dependence on $u$ explicit.

For Theorem \ref{thm4}, the lower bound 
$N_{F_a} (\sigma_1, \sigma_2,T) \gg T$
appears in the paper of Laurin\v{c}ikas \cite{Lauri} with the condition $1/2
< \sigma_1 < \sigma_2< 1$, and the restriction that
the sequence $a$ be a linear combination of at least 
two Dirichlet characters modulo $q$. 

\newpage

\section{Proof of Theorem 1}

(i) For a Dirichlet character $\chi$ modulo $q/d$, we denote by $\widetilde{\chi}$
the arithmetic function defined by
\begin{equation*}
    \widetilde{\chi} = \chi \left(\frac{n}{d}\right)
\end{equation*}
with the usual convention that $\chi(t)=0$ if $t$ is not a positive integer.
By Lemma 1 of \cite{CDZ98}, the functions $\widetilde{\chi}$ form an orthogonal
basis for the $q$-periodic sequences $(a_n)_{n\ge 1}$ with scalar product 
$\langle a,b \rangle = \sum_{n=1}^q a_n \overline{b_n}$. The result now follows from 
\begin{equation*}
  \sum_{n=1}^{+\infty} \frac{\widetilde{\chi}(n)}{n^s} = \frac{L_\chi(s)}{d^s},
  \qquad   \R s > 1 .
\end{equation*}

(ii) We are going to apply part (i) and a change of basis in each $E_{q,\psi}$. 
Let $\psi$ be a primitive Dirichlet character whose conductor $m$ is a divisor of $q$.
For a Dirichlet character $\chi$ modulo $q/d$ induced by $\psi$ we have
\begin{equation*}
\begin{split}
\frac{L_\chi(s)}{d^s L_\psi(s)} &= \frac{1}{d^s}\prod_{p | \frac{q}{d}}
\left(1-\frac{\psi(p)}{p^s}\right) \\
 &= \frac{1}{d^s}\prod_{p | \frac{q}{m d}}
\left(1-\frac{\psi(p)}{p^s}\right) \\
&= \frac{1}{q'^s} b^s \prod_{p | b}
\left(1-\frac{\psi(p)}{p^s}\right) 
\end{split}
\end{equation*}
where $q' := \frac{q}{m}$ and $b := \frac{q'}{d}$.

By part (i) of the theorem, we thus have the orthogonal sum
\begin{equation}\label{Hq}
    H_q = \bigoplus_{\psi \in \mathcal{D}_q^{\text{pr}}} 
    \frac{L_\psi(s)}{q'^s} \cdot V_{q,\psi} ,
\end{equation}
where
\begin{equation*}
\begin{split}
V_{q,\psi} &= \text{Vect} \left\{b^s \prod_{p | b}
\left(1-\frac{\psi(p)}{p^s}\right) : b | q' \right\} \\
 &= \text{Vect} \left\{\left( \prod_{i=1}^r x_i^{\beta_i}\right)
 \left(\prod_{\beta_i \ge 1} \left(1-\frac{\psi(p_i)}{x_i}\right)\right) : 
 0 \le \beta_i \le \alpha_i \right\} 
\end{split}
\end{equation*}
with
\begin{equation*}
    q' = p_1^{\alpha_1} \cdots p_r^{\alpha_r}
    \quad \text{and} \quad
    x_i := p_i^s .
\end{equation*}
We order the two families
\begin{enumerate}
\item the free family of 
$\displaystyle \mathbb{C} [x_1,\ldots, x_r] : 
\left(\prod_{i=1}^r x_i^{\beta_i}\right)_{0 \le \beta_i \le \alpha_i}$
\item the family 
$\displaystyle \left( \prod_{i=1}^r x_i^{\beta_i}\right)
 \left(\prod_{\beta_i \ge 1} \left(1-\frac{\psi(p_i)}{x_i}\right)
 \right)_{0 \le \beta_i \le \alpha_i }$
\end{enumerate}
according to the order on the $\beta=(\beta_i)$ given by
\begin{equation*}
    \beta < \beta' \ \ \text{iff} \ \ \left|
    \begin{array}{l}
    \displaystyle \sum_{i=1}^r \beta_i < \sum_{i=1}^r \beta'_i \\
    \qquad \text{or} \\
    \displaystyle  \sum_{i=1}^r \beta_i = \sum_{i=1}^r \beta'_i
    \ \ \text{and there is a $j$ with} \ \ \left|
    \begin{array}{l}
    i<j \Rightarrow \beta_i=\beta'_i \\
    \beta_j < \beta'_j
    \end{array} \right.
    \end{array} \right.
\end{equation*}
We observe that the second family is then the image of the first family under
an upper triangular matrix with ones on the diagonal. Thus
\begin{equation*}
    \frac{V_{q,\psi}}{q'^s} 
    = \frac{1}{q'^s} \text{Vect}\{d^s : d|q' \} 
    = \text{Vect}\left\{\frac{1}{d^s} : d|q' \right\},
\end{equation*}
and the result follows from \eqref{Hq}.

\section{Preparation for Theorem 2}

In the following two lemmas, we use the notation
\begin{equation*}
    D_n(R):= \left\{ z=(z_j)_{1\le j \le n} \in \mathbb{C}^n :
    |z_j|\le R \ \text{ for all } \ 1\le j \le n \right\}.
\end{equation*}

\begin{lemma}\label{lem1}
Let $q$ be a positive integer, and $y$ and $R$ be positive real numbers.
Let $\chi_1,\ldots,\chi_n$ be pairwise distinct Dirichlet characters modulo $q$.
Then there exists a real $\eta > 0$ such that for all fixed $\sigma$ with 
$1<\sigma \le 1+\eta$, and for all prime numbers $p>y$, there exists a continuous
function $t_p : D_n(R)\longrightarrow \mathbb{R}$, such that for all $z$ in $D_n(R)$
\begin{equation*}
    z=\left( \sum_{p>y} \frac{\chi_j(p)}{p^{ \, \sigma+i t_p(z)}}\right)_{1\le j \le n}
\end{equation*}
\end{lemma}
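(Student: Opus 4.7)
The plan is to exploit the linear independence of the distinct characters $\chi_1,\ldots,\chi_n$ to reduce the $n$-dimensional target to one scalar problem per residue class modulo $q$ coprime to $q$; each scalar problem is then solved by a continuous selection of phases of primes in that progression, made possible by the fact that for $\sigma$ close to $1$ the partial sum $\sum_{p\equiv a\pmod q}p^{-\sigma}$ is arbitrarily large.

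I would begin by enumerating the residues $a_1,\ldots,a_{\varphi(q)}$ coprime to $q$ and setting $P_k:=\{p>y:p\equiv a_k\pmod q\}$. For the finitely many primes $p>y$ dividing $q$, set $t_p(z):=0$; these contribute a vector $\tilde C=\tilde C(\sigma)\in\mathbb C^n$ with $|\tilde C|$ uniformly bounded for $\sigma\in(1,2]$. Writing $y_k:=\sum_{p\in P_k}p^{-\sigma-it_p}$, the desired identity becomes the linear system $A'y=z-\tilde C$, where $A'=(\chi_j(a_k))_{\substack{1\le j\le n\\ 1\le k\le\varphi(q)}}$ has rank $n$ by linear independence of $\chi_1,\ldots,\chi_n$ on $(\mathbb Z/q\mathbb Z)^{\times}$. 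Taking $y:=(A')^{\dagger}(z-\tilde C)$ (Moore--Penrose pseudoinverse) produces a continuous, linear-in-$z$ target vector whose components $|y_k|$ are uniformly bounded by some constant $R'$ on $D_n(R)$, independently of $\sigma\in(1,2]$.

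I would then choose $\eta>0$ small enough that $\sum_{p\in P_k}p^{-\sigma}>R'+2$ for every $\sigma\in(1,1+\eta]$ and every $k$; this is possible since these sums diverge (at rate $\varphi(q)^{-1}\log(1/(\sigma-1))$ as $\sigma\to 1^+$) by Mertens' theorem for arithmetic progressions. It then remains, for each $k$, to build a continuous map
\begin{equation*}
w\in\overline D(0,R')\;\longmapsto\;(t_p(w))_{p\in P_k}\in\mathbb R^{P_k}
\end{equation*}
such that $\sum_{p\in P_k}p^{-\sigma-it_p(w)}=w$. Writing $u_p=e^{-it_p\log p}\in S^1$, this amounts to a continuous section of the surjection $(u_p)\mapsto\sum_{p\in P_k}p^{-\sigma}u_p$ onto $\overline D(0,R')$; once an $S^1$-valued section is constructed, we lift it to $\mathbb R$-valued $t_p$'s using the simple connectedness of $\overline D(0,R')$, and finally set $t_p(z):=t_p(y_k(z))$ for $p\in P_k$.

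The main obstacle is precisely this continuous section. Using only two primes per progression is excluded by the annular constraint $|p_1^{-\sigma}-p_2^{-\sigma}|\le|w|$ which forbids targets near $0$; using all the primes of $P_k$ with a naive ansatz encounters monodromy at $w=0$ where the fibre of the sum-of-circles map degenerates. A workable construction assigns to most primes in $P_k$ constant phases with a prescribed joint contribution, and uses a controlled finite sub-family of primes — comparable in size and with total weight exceeding the residual radius — to realise the target continuously, arranging the partition so that $0$ becomes a regular value of the sum-of-circles map (for instance, so that no partial sum of the chosen $p^{-\sigma}$ equals its complement). Composing this section with the linear map $z\mapsto(A')^{\dagger}(z-\tilde C)$ finally yields the continuous $t_p(z)$.
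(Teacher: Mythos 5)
Your reduction is sound and is in fact the same one the paper uses: decouple the $n$ conditions into one scalar equation per reduced residue class modulo $q$ via the character matrix (the paper extends to all $\varphi(q)$ characters and inverts the square matrix $C$; your pseudoinverse of the rank-$n$ matrix $A'$ does the same job), and choose $\eta$ so that $S_a=\sum_{p>y,\ p\equiv a\ (q)}p^{-\sigma}$ comfortably exceeds the radius $R'$ needed for the targets $w_a$. (Minor point: the primes $p>y$ dividing $q$ contribute nothing, since $\chi_j(p)=0$, so your $\tilde C$ is simply $0$.)

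The genuine gap is the step you yourself flag as ``the main obstacle'': the continuous section of $(u_p)_{p\in P_k}\mapsto\sum_{p\in P_k}p^{-\sigma}u_p$ over the disk $|w|\le R'$ is the entire content of the lemma beyond the standard orthogonality reduction, and you do not construct it. Asserting ``a workable construction'' and arranging that $0$ be a regular value (no signed subsum vanishing) does not produce a continuous selection: a surjection having $0$ as a regular value need not admit a global continuous section, and the monodromy difficulty you mention is not removed by that condition. The paper resolves it explicitly: split the primes of the progression into three consecutive blocks whose relative weights $\lambda_0,\lambda_1,\lambda_2$ are all within $1/50$ of $1/3$; give the first block phase $0$, the second $\pi+u_1$, the third $\pi-u_2$ with $u_1,u_2\in\left]0,\pi/2\right[$. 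The scalar equation then reads $\lambda_1e^{iu_1}+\lambda_2e^{-iu_2}=\lambda_0-w_a/S_a$, and the map $(u_1,u_2)\mapsto\lambda_1e^{iu_1}+\lambda_2e^{-iu_2}$ is a diffeomorphism onto its image, which contains the disk of radius $1/10$ centered at $\lambda_0$; since $S_a\ge 10\|C^{-1}\|_\infty R$, the right-hand side stays inside that disk, so $(u_1,u_2)$, hence each $t_p$, depends continuously on $w_a$ and thus on $z$. The essential trick, which your sketch is missing, is to offset the target by the fixed bulk contribution so that it stays near $\lambda_0\approx 1/3$, well away from the degenerate values of the sum-of-circles map where continuity of an inverse could fail; without some such explicit two-parameter device your argument stops exactly at the point the lemma is about.
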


\begin{remark}
We can interpret this lemma as a linear system to be solved. There are $n$
equations. The unknowns are the infinite family of $\left(p^{-it_p}\right)_{p>y}$
that must be chosen in the unit circle. The $z \in \mathbb{C}^n$ is a parameter.
Moreover, the solution must be chosen continuously in the parameter $z$.
\end{remark}

\begin{proof}
If $n<\varphi(q)$, we extend $(\chi_j)_{1\le j \le n}$ to $(\chi_j)_{1\le j \le \varphi(q)}$,
using all the Dirichlet characters modulo $q$. This allows us to restrict the proof
to the case $n=\varphi(q)$.

We denote by $C$ the unitary matrix of the characters modulo $q$. That is, 
\begin{equation*}
C:= \left( \chi_j(a)\right)_{\substack{1\le a \le q, \ (a,q)=1 \\ 1\le j \le \varphi(q)
  \qquad}}
\end{equation*}
We have 
\begin{equation*}
   \sum_{p>y} \frac{\chi_j(p)}{p^{ \, \sigma+i t_p}} 
   =\sum_{\substack{1\le a \le q \\ (a,q)=1}} \chi_j(a) 
     \sum_{\substack{p>y \\ p\equiv a \, (q)}} \frac{1}{p^{ \, \sigma+i t_p}} 
\end{equation*}
To change variables we write
\begin{equation*}
 z=C w ,
\end{equation*}
where 
\begin{equation*}
z=(z_j)_{1\le j \le \varphi(q)} \quad \text{and} \quad 
w=(w_a)_{\substack{1\le a \le q \\ (a,q)=1}} ,
\end{equation*}
and
\begin{equation*}
 \theta_p = -(\log p) ( t_p \circ C) .
\end{equation*}
To prove the lemma, it is sufficient to solve the system
\begin{equation}\label{weq}
    \sum_{\substack{p>y \\ p\equiv a \, (q)}} \frac{e^{i \theta_p}}{p^\sigma}= w_a,
     \qquad
    1\le a \le q, \ (a,q)=1,
\end{equation}
in the real unknowns $(\theta_p)_{p>y}$, 
continuously in $w \in D_{\varphi(q)}(\|C^{-1}\|_\infty R)$.
We put
\begin{equation*}
 S_a = S_a(q,y,\sigma) := \sum_{\substack{p>y \\ p\equiv a \, (q)}} \frac{1}{p^\sigma} .
\end{equation*}
Using the prime number theorem for arithmetic progressions, we readily find that there exists an 
$\eta > 0$, such that for each $1<\sigma \le 1+\eta$ and $1\le a \le q$, $(a,q)=1$, we have
\begin{equation}\label{Sa}
    S_a \ge 10 \|C^{-1}\|_\infty R ,
\end{equation}
and there exist prime numbers $p_{1,a}$ and $p_{2,a}$, such that
\begin{equation*}
\frac{1}{3} \le \lambda_0 
:= \frac{1}{S_a} \sum_{\substack{y< p \le p_{1,a} \\ p\equiv a \, (q)}} \frac{1}{p^\sigma}
\le \frac{1}{3} +  \frac{1}{100} 
\end{equation*}
and
\begin{equation*}
\frac{1}{3} \le \lambda_1
:= \frac{1}{S_a} \sum_{\substack{p_{1,a}< p \le p_{2,a} \\ p\equiv a \, (q)}} \frac{1}{p^\sigma}
\le \frac{1}{3} +  \frac{1}{100} .
\end{equation*}
We also write
\begin{equation*}
\lambda_2
:= \frac{1}{S_a} \sum_{\substack{p > p_{2,a} \\ p\equiv a \, (q)}} \frac{1}{p^\sigma},
\end{equation*}
such that 
\begin{equation*}
\lambda_0 + \lambda_1 + \lambda_2 =1 .
\end{equation*}
We choose
\begin{equation*}
\theta_p = \left|
\begin{array}{ll}
0           & \ \ \text{if} \ \ y<p\le p_{1,a} \\
\pi + u_1   & \ \ \text{if} \ \ p_{1,a}<p \le p_{2,a} \\
\pi - u_2   & \ \ \text{if} \ \ p_{2,a}<p \\
\end{array} \right.
\end{equation*}
with $u_1$ and $u_2$ to be fixed later. In view of \eqref{weq}
it is sufficient to solve, for each $a$, the equation
\begin{equation}\label{leq}
    \lambda_1 e^{i u_1} + \lambda_2 e^{-i u_2} = \lambda_0 - \frac{w_a}{S_a}
\end{equation}
in the real unknowns $u_1$ and $u_2$, continuously in $w_a$ for $|w_a|\le \|C^{-1}\|_\infty R$.
We define the function $F$ by
\begin{equation*}
\begin{split}
F : \ \left]0, \frac{\pi}{2} \right[^2  & \longrightarrow \mathbb{C} \\
(u_1, u_2) & \longmapsto \lambda_1 e^{i u_1} + \lambda_2 e^{-i u_2} .
\end{split}
\end{equation*}
$F$ is a diffeomorphism onto its image. Moreover, since
$\frac{1}{3} \le \lambda_0, \lambda_1 \le \frac{1}{3}+\frac{1}{100}$, 
and
$\frac{1}{3} - \frac{1}{50}\le \lambda_2 \le \frac{1}{3}$,
we have
\begin{equation*}
\left\{ s \in \mathbb{C} : |s-\lambda_0| \le \frac{1}{10} \right\} \subset \text{ Im } F,
\end{equation*}
as illustrated in the following figure.

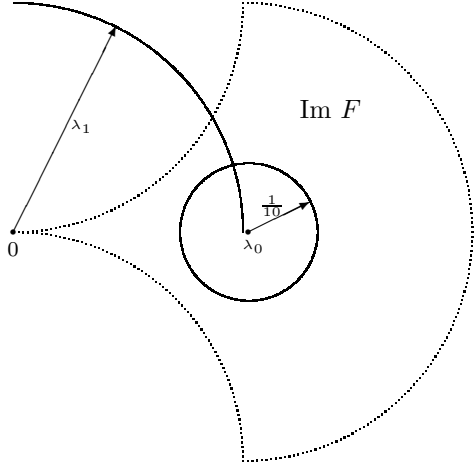
\begin{figure}[h]
\setlength{\unitlength}{.3in}
\noindent
\begin{picture}(14,8)
% 4 units represent a length of 1/3 = lambda_0
  \put(4,4){\circle*{.1}}
  \put(3,3.2){\makebox(2,1){{\footnotesize 0}}}
  \put(4,4){\vector(1,2){1.8}}
  \put(5,5.8){\tiny $\lambda_1$}
  \cbezier(4,8)(6.2091389992,8)(8,6.2091389992)(8,4)
  \cbezier[80](12,4)(12,1.7908610008)(10.2091389992,0)(8,0)
  \cbezier[80](8,8)(8,5.7908610008)(6.2091389992,4)(4,4)
  \cbezier[80](8,8)(10.2091389992,8)(12,6.2091389992)(12,4)
  \cbezier[80](4,4)(6.2091389992,4)(8,2.2091389992)(8,0)
  
  \cbezier(6.9,4)(6.9,4.662742)(7.437258,5.2)(8.1,5.2)
  \cbezier(8.1,5.2)(8.762742,5.2)(9.3,4.662742)(9.3,4)
  \cbezier(9.3,4)(9.3,3.337258)(8.762742,2.8)(8.1,2.8)
  \cbezier(8.1,2.8)(7.437258,2.8)(6.9,3.337258)(6.9,4)
  \put(8.1,4){\vector(2,1){1.08}}
  \put(8.3,4.4){\tiny $\frac{1}{10}$}
  \put(8.1,4){\circle*{.1}}
  \put(8,3.7){\tiny $\lambda_0 $}
  
  \put(9,6){Im $F$}
\end{picture}
\caption{The image of $F$, depicted by the region with the dotted boundary, 
contains the disk with center $\lambda_0$ and radius $\frac{1}{10}$.}
\end{figure}

Thus by \eqref{Sa} we can solve \eqref{leq} continuously in $w_a$. This
concludes the proof of Lemma \ref{lem1}.
\end{proof}

\newpage

\begin{lemma}\label{lem2}
Let $q$ and $L$ be positive integers, and $R\ge 1$ be real.
Let $\chi_1, \ldots, \chi_n$ be pairwise distinct Dirichlet characters modulo $q$.
For all $1\le j \le n$, let $h_j$ be a non-zero rational function in $L$ complex variables.
Then there exists a real $\eta>0$ such that, for all $\sigma$ with $1<\sigma \le 1+\eta$, we have
\begin{multline*}
\left\{ z \in \mathbb{C}^n : \frac{1}{R} \le |z_j| \le R \right\} \\
\subset
\left\{ \left( h_j\left( \frac{1}{p_1^{\sigma+it_{p_{1}}}}, \ldots,\frac{1}{p_{L}^{\sigma+it_{p_{L}}}}\right)
\prod_{p>p_L} \left( 1-\frac{\chi_j(p)}{p^{\, \sigma+i t_p}} \right)^{-1}
\right)_{1\le j \le n} : t_p \in \mathbb{R} \right\}
\end{multline*}
\end{lemma}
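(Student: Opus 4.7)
The plan is to reduce the lemma to a fixed-point problem and then solve it by combining Lemma~\ref{lem1} with Brouwer's theorem. Fix a $\sigma \in (1, 1+\eta]$ and a target $z$ with $1/R \le |z_j| \le R$. First I would choose real angles $t_{p_i}^*$ ($i = 1, \dots, L$) so that the point $u^* := (p_i^{-\sigma - i t_{p_i}^*})_{i=1}^L$ avoids the zero and pole loci of every $h_j$; this is possible because each $h_j$ is a non-zero rational function, so its zero and pole sets are proper complex subvarieties of $\mathbb{C}^L$, and the admissible $L$-torus (of radii $p_i^{-\sigma}$) meets the complement of their finite union. With such a choice, $|h_j(u^*)| \in [c_1, c_2]$ for fixed constants $c_1, c_2 > 0$. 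Setting $A_j := z_j/h_j(u^*)$ reduces the lemma to finding real $t_p$ ($p > p_L$) satisfying $\prod_{p > p_L}(1 - \chi_j(p) p^{-\sigma - i t_p})^{-1} = A_j$ for every $j$, where $|A_j|$ now lies in a fixed annulus independent of~$z$.

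Since $\sigma > 1$ the Euler product converges absolutely and its logarithm splits as $W_j(t) + G_j(t)$, where
\begin{equation*}
W_j(t) = \sum_{p > p_L} \frac{\chi_j(p)}{p^{\sigma + i t_p}}, \qquad G_j(t) = \sum_{k \ge 2} \frac{1}{k} \sum_{p > p_L} \frac{\chi_j(p)^k}{p^{k(\sigma + i t_p)}}.
\end{equation*}
Using $-\log(1-x) - x = O(|x|^2)$ one sees that $|G_j(t)| \le C_G$ uniformly in $t$ and in $\sigma \ge 1$. Picking a branch $B_j$ of $\log A_j$, bounded in modulus by some $C_B$ depending only on $c_1, c_2, R$, the task becomes: solve $W_j(t) + G_j(t) = B_j$ for each $j$. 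I would then apply Lemma~\ref{lem1} with $y := p_L$, the characters $\chi_1, \dots, \chi_n$, and radius $R' := C_B + C_G$; after possibly shrinking $\eta$, this provides continuous functions $t_p : D_n(R') \to \mathbb{R}$ with $W(t(w)) = w$ for every $w \in D_n(R')$.

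Finally, define the continuous self-map $\Phi : D_n(R') \to \mathbb{C}^n$ by $\Phi(w)_j := B_j - G_j(t(w))$; its continuity follows from continuity of each $t_p(w)$ in $w$ together with uniform convergence of the series defining $G_j$. Because $|\Phi(w)_j| \le C_B + C_G = R'$, the map $\Phi$ sends the closed polydisk $D_n(R')$ (homeomorphic to a closed ball in $\mathbb{R}^{2n}$) into itself, so Brouwer's fixed point theorem yields $w^* \in D_n(R')$ with $\Phi(w^*) = w^*$. This equality rewrites as $W_j(t(w^*)) + G_j(t(w^*)) = B_j$, and exponentiating gives the desired Euler product identity, concluding the proof. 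I expect the main obstacle to be the self-referential nature of the system: prescribing the first-order sum $W_j$ via Lemma~\ref{lem1} simultaneously perturbs the higher-order tail $G_j$, and Brouwer's theorem is precisely what severs this feedback loop and forces consistency.
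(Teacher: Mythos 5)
Your proposal is correct and follows essentially the same route as the paper: divide out the $h_j$ factor at a point of the finite torus avoiding its zeros and poles, prescribe the first-order sums $W_j$ via Lemma~\ref{lem1}, and use Brouwer's fixed point theorem to absorb the bounded second-order tail $G_j$ (the paper's error term $E$), exactly mirroring the paper's map $w \mapsto z + E\left((t_p(w))_p\right)$. The one point to tighten is the order of quantifiers: since $\eta$ must be fixed before $\sigma$, you should choose the angles $t_{p_1}^*,\ldots,t_{p_L}^*$ so that the $h_j$ have neither zeros nor poles along the whole segment $1\le\sigma\le 2$ (as the paper does), which makes $c_1,c_2$, hence $R'$ and the $\eta$ furnished by Lemma~\ref{lem1}, independent of $\sigma$ and removes the apparent circularity in ``fix $\sigma\in(1,1+\eta]$'' before $\eta$ is determined.
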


\begin{proof}
We first consider the particular case where all the $h_j$ are $1$. We put $y=p_L$
and $R'=\pi+\log R$.
Applying Lemma \ref{lem1} (and changing the letter $z$ to $w$) we have continuous 
functions $t_p$ such that
\begin{equation}\label{lem2eq1}
    w_j = \sum_{p>y} \frac{\chi_j(p)}{p^{\, \sigma + it_p(w)}} ,
    \qquad 
    w \in D_n(1+R'), \ 1\le j \le n .
\end{equation}
We define the error term $E$ by
\begin{equation}\label{lem2eq2}
    \left(\sum_{p>y} \log \left(1-\frac{\chi_j(p)}{p^{\, \sigma +it_p}}\right)\right)_{1\le j\le n}
    = \left( -\sum_{p>y} \frac{\chi_j(p)}{p^{\, \sigma +it_p}} \right)_{1\le j\le n}
    + E\left( (t_p)_{p>y} \right) .
\end{equation}
The real number $\sigma>1$ being fixed, the function $E$ is continuous for the weak-convergence
topology. Moreover, for all $j$ and all $(t_p)_{p>y}$, we have
\begin{equation}\label{lem2eq3}
    \left| E_j \left( (t_p)_{p>y} \right) \right| \le \sum_p \frac{1}{p^2} <1 .
\end{equation}
Let $z\in D_n(R')$ be fixed. 
From \eqref{lem2eq3} we see that, for all $j$ and all $(t_p)_{p>y}$,
\begin{equation*}
    \left| z_j + E_j \left( (t_p)_{p>y} \right) \right| \le 1+R' .
\end{equation*}
Thus we have the following continuous function.
\begin{equation*}
\begin{split}
F : \  D_n(1+R') & \longrightarrow  D_n(1+R') \\
w & \longmapsto z + E\left( (t_p(w))_{p>y} \right)
\end{split}
\end{equation*}
The Brouwer fixed point theorem shows that there exists a $w \in D_n(1+ R')$
such that $F(w)=w$. Together with \eqref{lem2eq1} and \eqref{lem2eq2} this yields
 \begin{equation*}
   \left(-\sum_{p>y} \log \left(1-\frac{\chi_j(p)}{p^{\, \sigma +it_p(w)}}\right)
   \right)_{1\le j\le n} = z .
\end{equation*}
Taking exponentials allows us to conclude the case when $h_j \equiv 1$.

We now consider the case with a general $h$. Let us choose 
$\left( t_{p_1}, \ldots, t_{p_L}\right)$ such that for all j, 
$h_j\left( \frac{1}{p_1^{\sigma+it_{p_{1}}}}, \ldots,\frac{1}{p_{L}^{\sigma+it_{p_{L}}}}\right)$
has neither zeros nor poles for $1\le \sigma \le 2$.
We put
\begin{equation*}
\begin{split}
c &:= \min_{1\le j \le n} \min_{1\le \sigma \le 2}\left|
h_j\left( \frac{1}{p_1^{\sigma+it_{p_{1}}}}, \ldots,\frac{1}{p_{L}^{\sigma+it_{p_{L}}}}\right)
\right| ,\\
C &:= \max_{1\le j \le n} \max_{1\le \sigma \le 2}\left|
h_j\left( \frac{1}{p_1^{\sigma+it_{p_{1}}}}, \ldots,\frac{1}{p_{L}^{\sigma+it_{p_{L}}}}\right)
\right| .
\end{split}
\end{equation*}
Applying the particular case where $h_j \equiv 1$ with 
$\widetilde{R}= \max \left(\frac{C}{R}, \frac{R}{c}\right)$ allows us to conclude the general case.
\end{proof}

\section{Proof of Theorem 2}

If $\sigma_1 <1$ then $N'_F(\sigma_1, \sigma_2, T) \gg_{F, \sigma_1, \sigma_2} T$ 
by Theorem 2 of \cite{KK07}. We may thus restrict our attention to the case $\sigma_1 \ge 1$.

We choose $q$ to be the least common multiple of the conductors of the $\psi$ in $\mathcal{C}$,
and we write $\mathcal{C} = \{\psi_1, \ldots, \psi_n\}$ with $2\le n \le \varphi(q)$.
We use the notation
\begin{equation*}
  F_j(s) = P_{\psi_j}(s) L_{\psi_j}(s)
\end{equation*}
and
\begin{equation*}
  P_{\psi_j}(s) = \sum_{k\ge 1} \frac{c_{j,k}}{k^s}
\end{equation*}
We choose $y=p_L$ such that if $p$ divides a $k$ for which there is a $j$ such
that $c_{j,k} \neq 0$, then $p\le y$. Denoting by $\chi_j$ the Dirichlet character
modulo $q$ that is induced by $\psi_j$ we can thus write
\begin{equation*}
F_j(s) = 
h_j\left( \frac{1}{p_1^{s}}, \ldots,\frac{1}{p_{L}^{s}}\right)
\prod_{p>p_L} \left( 1-\frac{\chi_j(p)}{p^{\, s}} \right)^{-1}
\end{equation*}
where $h_j$ is a nonzero rational function such that 
\begin{equation}\label{thm3eq1}
  h_j \ \text{ has no poles in } \
  \{ (z_1, \ldots, z_L) \in \mathbb{C}^L : |z_l|<1 \} .
\end{equation}
Choosing $R=1$ we get by Lemma \ref{lem2} a real $\eta>0$, 
which will be the one we use for Theorem \ref{thm2}.  
Let $\sigma_1$ and $\sigma_2$ be real numbers such that $1\le \sigma_1 < \sigma_2 \le 1+\eta$.
We choose
\begin{equation*}
  \sigma = \frac{\sigma_1 +\sigma_2}{2} .
\end{equation*}
By Lemma \ref{lem2}, there is a sequence $(t_p)_p$ of real numbers such that for all $j$,
$1\le j \le n$, 
\begin{equation*}
  h_j\left( \frac{1}{p_1^{\sigma+it_{p_{1}}}}, \ldots,\frac{1}{p_{L}^{\sigma+it_{p_{L}}}}\right)
\prod_{p>p_L} \left( 1-\frac{\chi_j(p)}{p^{\, \sigma+i t_p}} \right)^{-1}
=e^{2i\pi j/n}
\end{equation*}
We write
\begin{equation*}
  G_j(s) := h_j\left( \frac{1}{p_1^{s+it_{p_{1}}}}, \ldots,\frac{1}{p_{L}^{s+it_{p_{L}}}}\right)
\prod_{p>p_L} \left( 1-\frac{\chi_j(p)}{p^{\, s+i t_p}} \right)^{-1} .
\end{equation*}
As $n \ge 2$, we have
\begin{equation}\label{thm3eq2}
    \sum_{j=1}^n G_j(\sigma) = 0 .
\end{equation}
We now choose a circle $C=C(\sigma,r)$ centered at $ \sigma = \frac{\sigma_1 +\sigma_2}{2}$
and with a radius $r$ with $0 < r < \frac{\sigma_2-\sigma_1}{2}$, such that 
$ \sum_{j=1}^n G_j(s)$ does not vanish on $C$. We write 
\begin{equation*}
\gamma := \min_{s \in C} \left| \sum_{j=1}^n G_j(s) \right| > 0 .
\end{equation*}
Because of \eqref{thm3eq1} and the uniform convergence of the infinite products, we can
choose a prime number $p_M \ge p_L$ such that for all $j$, $1\le j \le n$,
\begin{equation*}
\left| F_j(z) - 
h_j\left( \frac{1}{p_1^{z}}, \ldots,\frac{1}{p_{L}^{z}}\right)
\prod_{p_L < p \le p_M} \left( 1-\frac{\chi_j(p)}{p^{\, z}} \right)^{-1} \right|
< \frac{\gamma}{3n} , \qquad \R z \ge \sigma-r,
\end{equation*}
and
\begin{equation*}
 \left| G_j(s) - h_j \! \! 
 \left( \frac{1}{p_1^{s+it_{p_{1}}}}, \ldots,\! \frac{1}{p_{L}^{s+it_{p_{L}}}} \! \! \right)
\! \! \! \prod_{p_L <p \le p_M} \! \! \! 
\left( 1-\frac{\chi_j(p)}{p^{\, s+i t_p}} \right)^{-1} \right|
< \frac{\gamma}{3n}, \quad \R s \ge \sigma-r .
\end{equation*}

By Weyl's criterion, we know that the set $\{p_1^{it},\ldots,p_M^{it}\}$ is uniformly
distributed in $\{z : |z|=1 \}^M$. 
Using \eqref{thm3eq1} once more it follows that the set of $t \in \mathbb{R}$, such
that for all $s$ with $|s-\sigma| \le r$ and all $j$, $1\le j \le n$, 
\begin{multline*}
 \left| 
 h_j\left( \frac{1}{p_1^{s+it}}, \ldots,\frac{1}{p_{L}^{s+it}}\right)
\prod_{p_L <p \le p_M} \left( 1-\frac{\chi_j(p)}{p^{\, s+i t}} \right)^{-1} \right. \\
\left. - h_j\left( \frac{1}{p_1^{s+it_{p_{1}}}}, \ldots,\frac{1}{p_{L}^{s+it_{p_{L}}}}\right)
\prod_{p_L <p \le p_M} \left( 1-\frac{\chi_j(p)}{p^{\, s+i t_p}} \right)^{-1} \right|
< \frac{\gamma}{3n} ,
\end{multline*}
has positive lower density.
For these real $t$, we have thus
\begin{equation*}
\max_{s\in C} \left| \sum_{j=1}^n F_j(s+it)-G_j(s) \right|
<\gamma = \min_{s \in C} \left| \sum_{j=1}^n G_j(s) \right| 
\end{equation*}
As $\sum_{j=1}^n G_j(\sigma)=0$ (formula \eqref{thm3eq2}), it follows 
by Rouche's theorem that $F(s+it) = \sum_{j=1}^n F_j(s+it)$ has
at least one zero in $|s-\sigma|<r$. By the positive lower density of these $t$,
we conclude that $N'_F(\sigma_1, \sigma_2, T) \gg_{F, \sigma_1, \sigma_2} T$
for sufficiently large $T$.

\section{Proof of Theorem 3}

We give here only the upper bound for the number 
$N_a^+(1/2+u, +\infty, T)$
of zeros in 
$\frac{1}{2}+u < \R s < +\infty$, $0\le \I s \le T$.
The proof is similar for zeros with negative real part.

Let $\zeta(s,r)$ denote the Hurwitz zeta function.
From Theorem 1 of \cite{KaLa} we have, for $1/2<\sigma < 1$,
\begin{equation}\label{zero}
\begin{split}
\int_0^T |F_a(\sigma + i t)|^2 d t & = \frac{T}{q^{2\sigma}} \sum_{j=1}^q |a_j|^2 \zeta(2\sigma, j/q)
+ O\left(\frac{q^{2-2\sigma} T^{2-2\sigma} \sum_{j=1}^q |a_j|^2}{(2\sigma-1)(1-\sigma)}\right) \\
& = O_{a} \left( \frac{T}{(2\sigma-1)(1-\sigma)}\right),
\end{split}
\end{equation}
since $\zeta(2\sigma,r)= O_r((2\sigma -1)^{-1})$.
By Jensen's inequality,
\begin{equation}\label{5one}
\int_0^T \log |F_a(\sigma+it)| dt \le \frac{T}{2}\log \left(\frac{1}{T} 
\int_0^T |F_a(\sigma + i t)|^2 d t \right)
=O_a(T\log(1/u)),
\end{equation}
for $\sigma=(1+u)/2$, according to \eqref{zero}.

Let $a_m$ be the first nonzero term of the sequence $(a_n)_{n\ge 1}$,
and let $c\ge 2$ be large enough such that, for $\R (s)\ge c$, we have
$F_a(s)=\frac{a_m}{m^s}(1+\theta(s))$ with $|\theta(s)|\le 1/2$.
We apply Littlewood's lemma (see \cite{Tit}, Section 3.8) 
to the rectangle $R$ with vertices $c+ i$, $c+ iT$, $(1+u)/2+ iT$, $(1+u)/2+i$,  to get
\begin{multline*}
2\pi \sum_{\substack{\beta > (1+u)/2 \\ 1<\gamma \le T}} (\beta-(1+u)/2) 
=
\int_{1}^T \log|F_a((1+u)/2+it) | dt
-\int_{1}^T \log|F_a(c+it) | dt \\
+\int_{(u+1)/2}^{c} \arg F_a(\sigma+iT) d\sigma
-\int_{(u+1)/2}^{c} \arg F_a(\sigma+i) d\sigma .
\end{multline*}
The second integral is clearly $O_a(T)$ since $\log|F_a(c+it)| \ll 1$.
Steuding shows on page 302 of \cite{Steu} that $|\arg (1+\theta(\sigma + iT))| \ll \log T$,
if $\sigma$ is from a bounded interval.
Thus the third integral is $O_a(T)$. The last integral is bounded.
Together with \eqref{5one} this shows that
\begin{equation*}
\sum_{\substack{\beta > (1+u)/2 \\ 1<\gamma \le T}} (\beta-(1+u)/2) = O_a(T\log(1/u)).
\end{equation*}
The desired bound now follows from
\begin{equation*}
\frac{u}{2} \, N_a^+(1/2+u, +\infty, T) 
 =\sum_{\substack{\beta > 1/2+u \\ 0\le \gamma \le T}} \frac{u}{2}
\le \sum_{\substack{\beta > (1+u)/2 \\ 0\le \gamma \le T}} (\beta-(1+u)/2)
\ll_a T\log(1/u).
\end{equation*}

\section*{Acknowledgements}

The authors are grateful to Pierre Mazet for the email conversation which 
motivated this study, and to Michel Balazard, who has informed them of the orthogonal
basis of Codec\`{a}, Dvornicich, and Zannier.

\bigskip

\begin{center}
Laboratoire de Probabilit\'{e}s et Mod\`{e}les Al\'{e}atoires,\\
Universit\'{e} Pierre et Marie Curie, 
4 Place Jussieu,
75252 Paris Cedex 05, France \\
e-mail: eric.saias@upmc.fr
\end{center}

\begin{center}
Department of Mathematics, Southern Utah University,\\
Cedar City, UT 84720, USA \\
e-mail: weingartner@suu.edu
\end{center}


\begin{thebibliography}{9}
\bibitem{CDZ98}
\textit{P. Codec\`{a}, R. Dvornicich, U. Zannier}, 
Two problems related to the non-vanishing of $L(1,\chi)$,
J. de Theorie des Nombres de Bordeaux \textbf{10} (1998), 49-64.

\bibitem{KaLa}
\textit{A. Ka\v{c}enas, A. Laurin\v{c}ikas}, On the periodic zeta-function, 
Lith. Math. J. \textbf{41} (2001), no. 2, 168-177.

\bibitem{KK07}
\textit{J. Kaczorowski, M. Kulas}, 
On the non-trivial zeros off the critical line for 
L-functions from the extended Selberg class,
Monatsh. Math. \textbf{150} (2007), 217-232.

\bibitem{Lauri}
\textit{A. Laurin\v{c}ikas,} On zeros of linear combinations 
of Dirichlet series, Lith. Math. J. \textbf{26} (1986), 244-251. 

\bibitem{Steu}
\textit{J. Steuding,} On Dirichlet series with periodic coefficients,
Ramanujan J. \textbf{6} (2002), 295-306.

\bibitem{Steuding07}
\textit{J. Steuding,} Value Distribution of L-Functions,
Lecture Notes in Mathematics \textbf{1877}, Springer, 2007.

\bibitem{Tit}
\textit{E.C. Titchmarsh,} The Theory of Functions, Oxford University Press, Second Edition, 1939.
\end{thebibliography}
\end{document}